\documentclass[reqno]{amsart}
\usepackage{tikz-cd}
\usepackage[all,arc, cmtip]{xy}
\usepackage{amssymb, amsthm, amsmath, amsfonts}
\definecolor{darkblue}{rgb}{0,0,0.6}
\usepackage[ocgcolorlinks,colorlinks=true, citecolor=darkblue, filecolor=darkblue,
linkcolor=darkblue, urlcolor=darkblue]{hyperref}
\usepackage[nameinlink,capitalise,noabbrev]{cleveref}
\usepackage{enumitem}
\usepackage{mathtools}

\newcommand{\rB}{\mathrm{B}}
\newcommand{\rC}{\mathrm{C}}

\newcommand{\cA}{\mathcal{A}}
\newcommand{\cB}{\mathcal{B}}
\newcommand{\cC}{\mathcal{C}}

\newcommand{\cN}{\mathcal N}
\newcommand{\bbC}{\mathbb C}
\newcommand{\bbF}{\mathbb F}
\newcommand{\bbN}{\mathbb N}
\newcommand{\bbZ}{\mathbb Z}
\newcommand{\bP}{\mathbb{P}}
\newcommand{\bQ}{\mathbb{Q}}
\newcommand{\bJ}{\mathbb{J}}
\newcommand{\bK}{\mathbb{K}}

\DeclareMathOperator{\id}{id}
\DeclareMathOperator{\short}{\mathrm{sh}}

\newcommand{\tors}[2]{\langle #1,#2 \rangle}

\newcommand\blfootnote[1]{
    \begingroup
    \renewcommand\thefootnote{}\footnote{#1}
    \addtocounter{footnote}{-1}
    \endgroup
}

\numberwithin{equation}{section}
\theoremstyle{plain}
\newtheorem{thm}[equation]{Theorem}
\newtheorem{prop}[equation]{Pro\-po\-si\-tion}
\newtheorem{cor}[equation]{Corol\-lary}
\newtheorem{lem}[equation]{Lemma}
\theoremstyle{definition}
\newtheorem{defi}[equation]{Definition}
\newtheorem{ex}[equation]{Example}
\theoremstyle{remark}
\newtheorem{rem}[equation]{Remark}

\title{On Presentations of  \texorpdfstring{$\boldsymbol{K}$}{K}-groups  by generators and relations}

\author{Bernhard K\"ock}

\begin{document}

\begin{abstract}
In Grayson's combinatorial description of higher $K$-groups, the generators are bounded acyclic binary multi-complexes of arbitrary size. Generalising work by Kasprowski, Winges and the author, we show in this paper that multi-complexes of bounded size suffice and we provide the corresponding relations. Furthermore, we report on the progress in our attempt to algebraically prove the surjectivity of Quillen's d\'evissage isomorphism for $K_1$, and we give an elementary and fairly simple example in the codomain which appears to require a more sophisticated approach. 
\end{abstract}

\maketitle

\blfootnote{2020 Mathematics Subject Classification: Primary 19D99; Secondary 19B99}
\blfootnote{Key words: binary multi complexes of bounded size; ladder relation; Quillen's D\'evissage Theorem}

\section*{Introduction}

In his seminal paper \cite{Grayson2012}, Grayson has given a description of the higher $K$-groups of an exact category $\cN$ in terms of explicit generators and relations. The goal of this paper is to contribute to the study of this description in two different ways. 

Recall that Grayson's generators of $K_1\cN$ are so-called bounded acyclic binary complexes 
\[\begin{tikzcd} \ldots \ar[r, shift left, "d"] \ar[r, shift right, "d'"']& \bP_3 \ar[r, shift left, "d"] \ar[r, shift right, "d'"'] & \bP_2 \ar[r, shift left, "d"] \ar[r, shift right, "d'"']& \bP_1 \ar[r, shift left, "d"] \ar[r, shift right, "d'"']& \bP_0\end{tikzcd}\]
in $\cN$ meaning that both 
\[\begin{tikzcd} \ldots \ar[r, "d"] & \bP_1 \ar[r,  "d"] & \bP_0\end{tikzcd} \quad \textrm{ and } \quad  \begin{tikzcd} \ldots \ar[r, "d'"] &  \bP_1 \ar[r,  "d'"] & \bP_0\end{tikzcd}\]
are bounded acyclic complexes in $\cN$. Generators of $K_2\cN$ are bounded acyclic binary complexes of bounded acyclic binary complexes, and so on. In \cite{KW17} and \cite{KKW19}, Kasprowski, Winges and the author have shown that in fact, for any $k \ge 2$, complexes of bounded length $k$ suffice to generate $K_1\cN$ and have moreover established the corresponding relations. In the case $k=2$, these relations are even fewer than the already quite compact and beautiful relations established by Nenashev in \cite{Nenashev1998}. In \cref{thm:short-complexes}, we generalise this result to all higher $K$-groups. For example for the second $K$-group, our result says that squares of acyclic binary complexes of side length 2 suffice to generate $K_2 \cN$ and it gives the corresponding relations. 

In the second part of the paper, we consider an inclusion $\cB \subset \cA$ of abelian categories which satisfies the assumptions of Quillen's D\'evissage Theorem and ask the natural  question whether the D\'evissage Theorem can be proved algebraically, i.e., using Graysons description and in particular without resorting to homotopy theory as in [Qui73]. We mainly only study this question in the situation every object of $\cA$ is of finite length and $\cB$ is the full subcategory of semisimple objects, and we only attempt to prove that the induced map $c: K_1\cB \rightarrow K_1 \cA$ is surjective. We succeed in showing purely algebraically that the class of every binary short exact sequence 
\[ \begin{tikzcd}M' \ar[r, shift left, "i"] \ar[r, shift right, "j"'] & M\ar[r, shift left, "p"] \ar[r, shift right, "q"']& M''\end{tikzcd}\]
where the length of $M$ is at most 6 belongs to the image of $c$ except possibly in the case when the length of both $M'$ and $M''$ is 3, both $U:= i(M') \cap j(M')$ and $M/(i(M') + j(M'))$ are simple and both $i^{-1}(U) \cap j^{-1}(U) $ and $M''/(pj(M') + qi(M'))$ vanish, see \cref{cor: length M at most 6}. An example of the case excluded is the binary short exact sequence
\[\begin{tikzcd} C_2 \oplus C_4 \ar[r, shift left, "i"] \ar[r, shift right, "j"'] & C_2 \oplus C_8 \oplus C_4 \ar[r, shift left, "p"] \ar[r, shift right, "q"']& C_2 \oplus C_4 \end{tikzcd}\]
in the abelian category $\cA$ of finitely generated $\bbZ/8\bbZ$-modules where $C_n$ denotes the cyclic group of order $n$ and 
\[i(\bar{a},\bar{b}) := (\bar{a},\overline{2b},0), \quad j(\bar{a},\bar{b}) := (0, \overline{4a}, \bar{b}), \quad p(\bar{a},\bar{b},\bar{c}) := (\bar{b},\bar{c}), \quad q(\bar{a},\bar{b},\bar{c}) := (\bar{a},\bar{b})\] 
(see \cref{ex: binary short exact sequence}). It seems that this sequence has to be first related to an enlarged sequence before it can be broken down into binary sequences of vector spaces over~$\bbF_2$, which looks to be a difficult problem.

\section{Higher \texorpdfstring{$K$}{K}-groups via binary acyclic multi-complexes of bounded size}

Let $\cN$ be an exact category. Let $\cC \cN$ denote the exact category of acyclic complexes  in $\cN$ supported on a finite subset of $[0,\infty)$. (Note, to avoid bulky notation in this paper, we drop the superscript q in the notation $C^\mathrm{q} \cN$ used for this category in other papers.) 

We recall from \cite[Section 3]{Grayson2012} that a {\em binary complex} $\bP=\left(\bP., \substack{d \\ {d'}}\right)$ in~$\cN$ is a graded object~$\bP.$ in $\cN$ together with two degree~$-1$ maps $d, {d'}\colon \bP. \rightarrow \bP.$ such that both $d^2=0$ and ${d'}^2=0$. 
If $d={d'}$,  the binary complex~$\bP$ is said to be {\em diagonal}.  A {\em morphism between binary complexes} is a degree~0 map between the underlying graded objects that is a chain map with respect to both differentials. Similarly to above, let $B\cN$ denote the category of acyclic bounded binary chain complexes in $\cN$ supported in non-negative degrees. 

By iterating, we obtain the exact categories $\cC_n \cN$ and $\cB_n\cN$ of {\em acyclic $n$-fold complexes} and of {\em binary acyclic $n$-fold complexes in $\cN$}. By mixing these operations, we moreover obtain categories such as $\cB\cC\cC\cB\cN$. If top and bottom differentials of a $\bP \in \cB_n(\cN)$ in any fixed direction agree, we call $\bP$ a {\em diagonal} complex. Via ``diagonalising'', we consider a category such as $\cB\cC\cC\cB\cN$ as a (full) subcategory of $\cB \cB\cC \cB \cN$ and of $\cB \cC \cB \cB \cN$, and each of the latter categories as a subcategory of $\cB_4 \cN$. Note that restricting to the top (or bottom) differentials provides a left-inverse to every such embedding. In particular, after applying any $K$-group functor, any such embedding becomes a split injection of abelian groups.

For $\boldsymbol{k} = (k_1, \ldots, k_n) \in \bbN^n$, let $\cB^{\boldsymbol{k}}_n\cN$ denote the full subcategory of $\cB_n \cN$ consisting of complexes supported on $[0,k_1] \times \ldots \times [0,k_n]$. The notation $\cC^{\boldsymbol{k}}_n\cN$ and similar notations are defined analogously. Note that $\cC_n^{\boldsymbol{k}}\cN$ and $\cB_n^{\boldsymbol{k}} \cN$ are exact categories again. 

Modulo \cite[Proposition~1.4]{HKT2017} (which shows that using complexes supported on $(-\infty, \infty)$ rather than on $[0,\infty)$ doesn't affect the associated $K$-theory), Grayson proves in \cite[Corollary~7.2]{Grayson2012} that Quillen's $n^\mathrm{th}$ $K$-group $K_n(\cN)$ is naturally isomorphic to the quotient of $K_0(\cB_n\cN)$ modulo $K_0(\cC\cB\ldots \cB \cN) + \ldots + K_0(\cB \ldots \cB \cC \cN)$.  Throughout this paper we will use this quotient as the definition of $K_n(\cN)$. 
Similarly, we define
\[K^{\boldsymbol{k}}_n(\cN) := K_0(\cB^{\boldsymbol{k}}_n\cN)\big/ \left(K_0(\cC^{k_1}\cB^{k_2}\ldots \cB^{k_n} \cN) + \ldots + K_0(\cB^{k_1} \ldots \cB^{k_{n-1}} \cC^{k_n} \cN)\right).\]
\begin{defi}\label{def:Lnk} Let $n \ge 1$.
 \begin{enumerate}
  \item\label{it:Lnka} Let $j \in \{1, \ldots, n\}$ (viewed as one of the $n$ directions in an $n$-fold complex). A \emph{binary $j$-ladder $(\bP,\bQ,\sigma,\tau)$ in $\cB_n \cN$} consists of two complexes $\bP$ and $\bQ$ in~$\cB_n \cN$ together with two isomorphisms $\sigma$ and $\tau$ between the graded objects underlying $\bP$ and $\bQ$ such that $\sigma$ commutes with the top differentials in direction~$j$, such that $\tau$ commutes with the bottom differentials in direction~$j$ and such that $\sigma$ and $\tau$ commute with both top and bottom differentials in all other directions.

\item\label{it:Lnkb} Let $\boldsymbol{k}\in \bbN^n_+$, let $j \in \{1, \ldots, n\}$ and let $(\bP, \bQ, \sigma, \tau)$ be a binary $j$-ladder in $\cB_n^{\boldsymbol{k}} \cN$. Via the differentials in direction $j$, we view $\bP$ as a binary acyclic complex of objects $\bP_i \in \cB_{n-1} \cN $, $i\ge 0$, and similarly for $\bQ$. For each $i \ge 0$,  we then obtain the binary acyclic complex
\[\begin{tikzcd}
  \bP_i\ar[r, shift left, "\sigma_i"]\ar[r, shift right, "\tau_i"'] & \bQ_i.
  \end{tikzcd}\]
of complexes in $\cB_{n-1} \cN$ and supported on $[0,1]$; we view this complex as a complex in $\cB^{\boldsymbol{k}}_n \cN$ using the displayed binary differential $(\sigma_i, \tau_i)$ as the differential in direction $j$ (note that $k_j \ge 1$). We write $\tors{\sigma_i}{\tau_i}_j$ for this complex and also for its class in $K_0(\cB^{\boldsymbol{k}}_n \cN)$ or $K_0(\cB_n\cN)$ or in any of their quotients such as $K_n^{\boldsymbol{k}}(\cN)$ or $K_n(\cN)$.
\item\label{it:Lnkc} Let $\boldsymbol{k}\in \bbN^n_+$. For $j \in \{1, \ldots, n\}$, let $R_j(\cB^{\boldsymbol{k}}_n \cN)$ denote the subgroup of $K_0(\cB^{\boldsymbol{k}}_n \cN)$ generated by the elements
 \[\bQ - \bP - \sum_{i=0}^{k_j} (-1)^i \tors{\sigma_i}{\tau_i}_j \]
 associated with binary $j$-ladders $(\bP,\bQ,\sigma,\tau)$ in $\cB_n^{\boldsymbol{k}}\cN$ where $\bP = \bQ$ (as graded objects) and where $\sigma$ and $\tau$ are involutions. We write $R_j(\cB^{\boldsymbol{k}}_n \cN)$ also for its image in $K^{\boldsymbol{k}}_n (\cN)$.  Define $L^{\boldsymbol{k}}_n(\cN)$ to be the quotient of $K_n^{\boldsymbol{k}}(\cN)$ modulo the subgroup $R_1(\cB^{\boldsymbol{k}}_n \cN)+ \ldots + R_n(\cB^{\boldsymbol{k}}_n \cN)$.
 \end{enumerate}
\end{defi}

An argument which is almost verbatim the same as the proof of \cite[Lemma~3.3]{KKW19} shows that the elements associated in \cref{def:Lnk}(3) with $j$-ladders in $\cB_n^{\boldsymbol{k}}\cN$ vanish in $K_n^{\boldsymbol{\tilde{k}}}(\cN)$ where $\boldsymbol{\tilde{k}}$ is obtained from $\boldsymbol{k}$ by adding $1$ to its $j$th component. Hence, the canonical map $K^{\boldsymbol{k}}_n (\cN) \to K_n(\cN)$ factorises via $L^{\boldsymbol{k}}_n(\cN)$.

\begin{thm}\label{thm:short-complexes}
 For every $n \ge 1$ and every $\boldsymbol{k} \in [2, \infty)^n$, the canonical map
 \[ L_n^{\boldsymbol{k}}(\cN) \to K_n(\cN) \]
 is an isomorphism.
\end{thm}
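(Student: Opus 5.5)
The plan is induction on $n$, reducing one direction at a time to the case $n=1$, which is the main theorem of \cite{KKW19}: for every $k\ge 2$ the map $L_1^{k}(\cN)\to K_1(\cN)$ is an isomorphism. The key observation is that an $n$-fold binary complex is a binary complex in one direction with values in $\cB_{n-1}\cN$. Grayson's identification (\cite[Corollary~7.2]{Grayson2012}) expresses $K_n(\cN)$ as $K_1$ of an exact category, roughly $K_1(\cB_{n-1}\cN)$ modulo the contributions of the diagonal subcategories $\cC\cB\ldots$ in the remaining directions. Accordingly, for $\boldsymbol{k}=(k_1,\ldots,k_n)$ I will set up intermediate groups $L_n^{(k_1,\ldots,k_m,\infty,\ldots,\infty)}(\cN)$, in which the first $m$ directions are bounded and carry the ladder relations $R_1,\ldots,R_m$ while the remaining directions are unbounded. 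I will then prove that each step $m-1\to m$ is an isomorphism.

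For a single step, bound direction $m$. Apply the $n=1$ theorem of \cite{KKW19} to the exact category $\cE:=\cB^{k_1}\cdots\cB^{k_{m-1}}\cB\cdots\cB\cN$ with the $m$th direction factored out. To make the directions commute, I first check that the bounded categories $\cB^{k}$, $\cC^{k}$ are exact (already noted) and that bounding in the other directions commutes with forming complexes in direction $m$. This gives
\[ K_0(\cB^{k_m}\cE)/\bigl(K_0(\cC^{k_m}\cE)+R_m\bigr)\;\xrightarrow{\ \cong\ }\;K_0(\cB\cE)/K_0(\cC\cE). \]
Next, quotient both sides by the images of the diagonal subcategories in the other directions and by the ladder relations $R_j$, $j\neq m$. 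Surjectivity is preserved automatically under these quotients. For injectivity, I need the relations on the right-hand side to be hit by relations on the left. This means showing that the images of $K_0(\cB\cdots\cC\cdots\cB\cN)$ (with $\cC$ in a direction $j\ne m$) and of $R_j$ on the right lie in the image of the corresponding bounded subgroups. I plan to get this by applying the same \cite{KKW19} isomorphism to the category with $\cC$ in slot $j$, or to the ladder data. This works because a $j$-ladder with $j\neq m$ is itself a binary complex in direction $m$, and the $\tors{\sigma_i}{\tau_i}_j$ terms decompose functorially. In other words, the \cite{KKW19} shortening procedure is natural in exact functors, so I will verify that it commutes with the functors "forget to top/bottom in direction $j$" and with forming $\tors{\cdot}{\cdot}_j$.

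The main obstacle I expect is exactly this compatibility: the shortening in \cite{KKW19} is given by explicit constructions (splicing/truncation with involutions) rather than by an abstract exact functor, and I must make sure that the relation $R_m$, which it produces, does not interact badly with relations in other directions. In particular, a $j$-ladder in $\cB^{\boldsymbol{k}}_n\cN$ must satisfy $\bP=\bQ$ with involutions, and after shortening in direction $m$ it must remain of that form. I would first try to present $L^{k}_1(\cE)$ via the universal property established in \cite{KKW19}: it is the target of a homomorphism from $K_1(\cE)$ defined by explicit formulas, and I would check that this map descends through each quotient. Finally, iterating over $m=1,\ldots,n$ and composing the isomorphisms yields $L_n^{\boldsymbol{k}}(\cN)\cong K_n(\cN)$. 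The composite is the canonical map because every step is induced by inclusions of bounded complexes.
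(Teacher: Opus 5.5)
Your proposal is essentially the paper's proof. The paper inducts on $n$: it applies \cite[Theorem~2.4]{KKW19} to $\cB^{\boldsymbol{k'}}_{n-1}\cN$ to bound direction $n$, then applies the inductive hypothesis to the coefficient categories $\cB\cN$ and $\cC\cN$, which unrolls to your chain of one-direction steps, and it handles the images of the $\cC$-subcategories exactly as you propose. The compatibility you defer is the paper's main step, and it is verified by the naturality you describe: by \cite[Proposition~3.9]{KKW19} the preimage of a $j$-ladder relation is $\short_n(\bP)+s_{\bP,n}-\short_n(\bQ)-s_{\bQ,n}+\sum_i(-1)^i\bigl(\short_n(\tors{\sigma_i}{\tau_i}_j)+s_{\tors{\sigma_i}{\tau_i}_j,n}\bigr)$, which up to sign is the sum of the relations of the ladder $(\short_n(\bP),\short_n(\bQ),\short_n(\sigma),\short_n(\tau))$ and of a second ladder on the correction terms $s_{\bP,n}$, $s_{\bQ,n}$, so the correction term needs a ladder of its own. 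Note also that the paper states this step for arbitrary $j$-ladders rather than tracking the $\bP=\bQ$/involution shape you were worried about.
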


\begin{proof}
We proceed by induction on $n$. 
The case $n=1$ has been proved in \cite[Theorem~2.4]{KKW19}. Henceforth, let $n\ge 2$ and assume that the statement is true for $n-1$. Furthermore, we fix $\boldsymbol{k} \in [2, \infty)^n$ and write $\boldsymbol{k'}$ for $(k_1, \ldots, k_{n-1})$.

By \cite[Theorem~2.4]{KKW19}, the canonical map $K_1^{k_n}(\cB^{\boldsymbol{k'}}_{n-1} \cN) \to K_1 (\cB^{\boldsymbol{k'}}_{n-1}\cN )$ induces an isomorphism $L_1^{k_n} (\cB^{\boldsymbol{k'}}_{n-1} \cN) \to K_1 (\cB^{\boldsymbol{k'}}_{n-1} \cN)$. Similarly to \cref{def:Lnk}\eqref{it:Lnkc}, from every binary $j$-ladder $(\bP,\bQ, \sigma, \tau)$ in $\cB^{(\boldsymbol{k'}, \infty)}_n \cN$ for any $j \in \{1, \ldots, n-1\}$ we obtain the ladder relation
\[x:= \bQ - \bP - \sum_{i \ge 0} (-1)^i \langle \sigma_i, \tau_i\rangle_j \quad \textrm{ in }\quad K_1 (\cB^{\boldsymbol{k'}}_{n-1} \cN).\]
As we will see later, the main step required to prove \cref{thm:short-complexes} is to show that the preimage $y$ of $x$ under the isomorphism $L_1^{k_n} (\cB^{\boldsymbol{k'}}_{n-1} \cN) \to K_1 (\cB^{\boldsymbol{k'}}_{n-1} \cN)$ is again a ladder relation, more precisely, an integral linear combination of ladder relations in $L_1^{k_n} (\cB^{\boldsymbol{k'}}_{n-1} \cN)$ corresponding to binary ladders in $B_n^{\boldsymbol{k}} \cN$. 
By the proof of \cite[Theorem~2.4]{KKW19}, all homomorphisms in the canonical inductive system
\[L_1^2 (\cB^{\boldsymbol{k'}}_{n-1} \cN) \to L_1^3 (\cB^{\boldsymbol{k'}}_{n-1} \cN) \to \ldots \]
are bijective and the canonical homomorphism from the limit of this inductive system to $K_1 (\cB^{\boldsymbol{k'}}_{n-1} \cN)$ is bijective as well. Now, the $j$-ladder $(\bP, \bQ, \sigma, \tau)$ is a $j$-ladder in $\cB_n^{(\boldsymbol{k'}, l_n)} \cN$ for some $l_n \ge 1$. The proof of the main step proceeds by downward induction on $l_n$. Over the next page or so, we (only) prove the inductive step $k_n +1 \rightarrow k_n$; all other inductive steps can be similarly proved.

Let the $n$-directional Grayson shortening $\short_n(\bP)  \in \cB^{\boldsymbol{k}}_n \cN$ be defined as follows. Via the differentials in direction $n$, we view $\bP$ as a binary acyclic complex of objects $\bP_{i} \in \cB^{\boldsymbol{k'}}_{n-1} \cN$:
 \[\begin{tikzcd}
  \ldots\ar[r, shift left, "d_3"]\ar[r, shift right, "d'_3"'] & \bP_2\ar[r, shift left, "d_2"]\ar[r, shift right, "d'_2"'] & \bP_1 \ar[r, shift left, "d_1"]\ar[r, shift right, "d'_1"']& \bP_0
 \end{tikzcd}\]
Let $\bJ, \bK \in \cB^{\boldsymbol{k'}}_{n-1} \cN$ denote the kernel of $d_1$ and $d'_1$, respectively. Then $\short_n(\bP) \in \cB^{\boldsymbol{k}}_n \cN = \cB^{k_n} \cB^{\boldsymbol{k'}}_{n-1} \cN$ is defined to be the binary acyclic complex of objects in $\cB^{\boldsymbol{k'}}_{n-1} \cN$ whose top and bottom differentials are given by the following diagrams:
\[\begin{tikzcd}[row sep= tiny]
 \ldots \ar[r] & \bP_3\ar[r, "d_3"]& \bP_2\ar[r, "d_2"]\ar[d, phantom, "\oplus"] & \bJ\ar[d, phantom, "\oplus"] \\
 & \oplus & \bK\ar[r, "\id"]\ar[d, phantom, "\oplus"] & \bK \\
 & \bJ\ar[r, "\id"]\ar[d, phantom, "\oplus"] & \bJ\ar[d, phantom, "\oplus"] & \oplus \\
 & \bK\ar[r, rightarrowtail] & \bP_1\ar[r, "d_1'"] & \bP_0 \\
 \end{tikzcd} \qquad
  \begin{tikzcd}[row sep= tiny]
 \ldots \ar[r] & \bP_3\ar[r, "d_3'"] & \bP_2\ar[r, "d_2'"]\ar[d, phantom, "\oplus"] & \bK\ar[d, phantom, "\oplus"] \\
 & \oplus & \bJ\ar[r, "\id"]\ar[d, phantom, "\oplus"] & \bJ \\
 & \bK\ar[r, "\id"]\ar[d, phantom, "\oplus"] & \bK\ar[d, phantom, "\oplus"] & \oplus \\
 & \bJ\ar[r, rightarrowtail] & \bP_1\ar[r, "d_1"] & \bP_0 \\
 \end{tikzcd}\]
(Strictly speaking, the direct sum of $\bJ$ and $\bK$ in each column of the right-hand diagram needs to be transposed, but we don't do so in order to avoid depicting arrows that cross over.) Slightly deviating from the notation in \cite{KKW19}, let $s_\bJ$ denote the switching automorphism of $\bJ \oplus \bJ$ and let $s_{\bP,n} := \langle \id_{\bJ \oplus \bJ}, s_\bJ\rangle_n \in \cB^{(\boldsymbol{k'},1)}_n \cN$. (As an aside, if we replace $\bJ$ with $\bK$ here, the class of $s_{\bP, n}$ in $K_0(\cB^{(\boldsymbol{k'},1)}_n \cN)$ stays the same because $\bJ = \bK$ in $K_0 (\cB^{\boldsymbol{k'}}_{n-1} \cN)$.) The notations $\short_n(\bP)$ and $s_{\bP,n}$ are of course also defined when $\bP$ is replaced with $\bQ$ or $\langle \sigma_i, \tau_i \rangle_j$. By (the proof of) \cite[Proposition~3.9]{KKW19}, we have
\[ y = \left(- \short_n(\bQ) - s_{\bQ,n}\right) + \left(\short_n(\bP) + s_{\bP,n}\right)+ \sum_{i \ge 0} (-1)^i \left(\short_n(\langle\sigma_i, \tau_i\rangle_j) + s_{\langle \sigma_i, \tau_j \rangle_j, n}\right)\]
in $L_1^{k_n}(\cB^{\boldsymbol{k'}}_{n-1} \cN)$. By restricting $\sigma_1$ and $\tau_1$ we obtain isomorphisms $\sigma_\bJ \colon \bJ_\bP \xrightarrow{\sim} \bJ_\bQ$, $\sigma_\bK \colon \bK_\bP \xrightarrow{\sim} \bK_\bQ$, $\tau_\bJ \colon \bJ_\bP \xrightarrow{\sim} \bJ_\bQ$ and $\tau_\bK \colon \bK_\bP \xrightarrow{\sim} \bK_\bQ$, where we now more precisely write $\bJ_\bP$ for $\bJ$ and where $\bJ_\bQ$, $\bK_\bP$ and $\bK_\bQ$ are defined similarly. Let the isomorphism $\short_n(\sigma) \colon \short_n(\bP) \to \short_n(\bQ)$ of graded objects be defined by
\begin{align*}
  &\short_n(\sigma)_0 = \sigma_\bJ \oplus \sigma_\bK \oplus \sigma_0, \quad
  \short_n(\sigma)_1 = \sigma_2 \oplus \sigma_\bK \oplus \sigma_\bJ \oplus \sigma_1, \quad
  \short_n(\sigma)_2 = \sigma_3 \oplus \sigma_\bJ \oplus \sigma_\bK
\end{align*}
and $\short_n(\sigma)_i = \sigma_{i+1}$ for all $i \ge 3$. Similarly, the isomorphism $\short_n(\tau) \colon \short_n(\bP) \to \short_n(\bQ)$ is defined by
\begin{align*}
  &\short_n(\tau)_0 = \tau_\bJ \oplus \tau_\bK \oplus \tau_0, \quad
  \short_n(\tau)_1 = \tau_2 \oplus \tau_\bK \oplus \tau_\bJ \oplus \tau_1, \quad
  \short_n(\tau)_2 = \tau_3 \oplus \tau_\bJ \oplus \tau_\bK
  \end{align*}
and $\short_n(\tau)_i = \tau_{i+1}$ for $i \geq 3$. Then $(\short_n(\bP), \short_n(\bQ), \short_n(\sigma), \short_n(\tau))$
is a binary $j$-ladder in $\cB^{\boldsymbol{k}}_n\cN$. As restricting to the $i^\mathrm{th}$ component in direction $j$ commutes with forming $\bJ$ and $\bK$, we have
\[\langle\short_n(\sigma)_i, \short_n(\tau)_i \rangle_j = \short_n(\langle \sigma_i, \tau_i \rangle_j)\]
for all $i \ge 0$. Furthermore, from the binary $j$-ladder $(\bP, \bQ, \sigma, \tau)$ we obtain the binary $j$-ladder $(\bJ_\bP, \bJ_\bQ, \sigma_\bJ, \tau_\bK)$ in $\cB^{\boldsymbol{k'}}_{n-1}\cN$, which in turn induces the binary $j$-ladder $(s_{\bP,n}, s_{\bQ,n}, (\sigma_\bJ \oplus \sigma_\bJ)^{ 2}, (\tau_\bK \oplus \tau_\bK)^{ 2})$ in $\cB^{\boldsymbol{k}}_n \cN$. Here, $(\sigma_\bJ \oplus \sigma_\bJ)^{ 2}$ denotes the isomorphism from $s_{\bP,n}$ to $s_{\bQ,n}$ which is given by $\sigma_\bJ \oplus \sigma_\bJ$ in $n$-degrees $0$ and $1$; similarly for $(\tau_\bK \oplus \tau_\bK)^{ 2}$. Again, as restricting to the $i^\mathrm{th}$ component in direction $j$ commutes with forming $\bJ$ and $\bK$, we have
\[\langle ((\sigma_\bJ \oplus \sigma_\bJ)^{ 2})_i, ((\tau_\bK \oplus \tau_\bK)^{ 2})_i \rangle_j = s_{\langle \sigma_i, \tau_i \rangle_j, n}\]
for all $i \ge 0$. Hence we have shown that $y$ is the sum of the relations given by the binary $j$-ladders
\[(\short_n(\bP), \short_n(\bQ), \short_n(\sigma), \short_n(\tau)) \quad \textrm{ and } \quad (s_{\bP,n}, s_{\bQ,n}, (\sigma_\bJ \oplus \sigma_\bJ)^{ 2}, (\tau_\bK \oplus \tau_\bK)^{ 2})\]
in $\cB^{\boldsymbol{k}}_n \cN$, as claimed above. 

In particular, we obtain the isomorphism
\begin{eqnarray}
\lefteqn{K_1^{k_n}(\cB^{\boldsymbol{k'}}_{n-1}\cN)\big/\left(R_1(\cB^{\boldsymbol{k}}_n\cN) + \ldots + R_n(\cB^{\boldsymbol{k}}_n \cN)\right)} \nonumber \\
&\cong& K_1(\cB_{n-1}^{\boldsymbol{k'}}\cN)\Big/ \left(R_1(\cB^{(\boldsymbol{k'},\infty)}_n \cN) + \ldots + R_{n-1}(\cB^{(\boldsymbol{k'},\infty)}_n \cN)\right). \label{equ: iso}
\end{eqnarray}
We therefore conclude
\begin{eqnarray*}
\lefteqn{L_n^{\boldsymbol{k}}(\cN)}\\
&\cong& K_0(\cB^{\boldsymbol{k}}_n \cN) \Big/ \Big(K_0(\cC^{k_1} \cB^{k_2} \ldots \cB^{k_n} \cN) + \ldots + K_0(\cB^{k_1} \ldots \cB^{k_{n-1}}\cC^{k_n}\cN)\\
&& \hspace*{18em}  + R_1 (\cB^{\boldsymbol{k}}_n \cN) + \ldots + R_n(\cB^{\boldsymbol{k}}_n\cN)\Big)\\
&\cong& K_1^{k_n}(\cB^{\boldsymbol{k'}}_{n-1} \cN) \Big/ \Big((K_1^{k_n}(\cC^{k_1}\cB^{k_2} \ldots \cB^{k_{n-1}} \cN) + \ldots \\
&& \hspace*{5em} + K_1^{k_n}(\cB^{k_1} \ldots \cB^{k_{n-2}} \cC^{k_{n-1}} \cN)
+ R_1 (\cB^{\boldsymbol{k}}_n \cN) + \ldots + R_n(\cB^{\boldsymbol{k}}_n\cN)\Big)\\
&\cong& K_1(\cB^{\boldsymbol{k'}}_{n-1} \cN) \Big/ \Big( (K_1(\cC^{k_1}\cB^{k_2} \ldots \cB^{k_{n-1}} \cN)+ \ldots + K_1(\cB^{k_1} \ldots \cB^{k_{n-2}} \cC^{k_{n-1}} \cN)\\
&& \hspace*{0.5em} \left.+ R_1(\cB^{(\boldsymbol{k'},\infty)}_n \cN)+ \ldots + R_{n-1}(\cB^{(\boldsymbol{k'},\infty)}_n \cN)\right) \qquad \textrm{(by isomorphism (\ref{equ: iso}))}\\
&\cong& K_0(\cB^{(\boldsymbol{k'}, \infty)}_n\cN)\Big/ \left(K_0(\cC^{k_1} \cB^{k_2} \ldots \cB^{k_{n-1}}\cB \cN)+\ldots + K_0(\cB^{k_1} \ldots \cB^{k_{n-1}} \cC \cN) \right. \\
&&\hspace*{14em} + \left.R_1(\cB^{(\boldsymbol{k'},\infty)}_n \cN)+ \ldots + R_{n-1}(\cB^{(\boldsymbol{k'},\infty)}_n \cN)\right)\\
&\cong& K^{\boldsymbol{k'}}_{n-1}(\cB \cN) \Big/ \left(K^{\boldsymbol{k'}}_{n-1}(\cC \cN) + R_1(\cB^{(\boldsymbol{k'},\infty)}_n \cN)+ \ldots + R_{n-1}(\cB^{(\boldsymbol{k'},\infty)}_n \cN)\right)\\
&\cong& L^{\boldsymbol{k'}}_{n-1}(\cB \cN) \big/ L^{\boldsymbol{k'}}_{n-1}(\cC \cN) \\
&\cong& K_{n-1}(\cB \cN)/ K_{n-1}(\cC\cN) \hspace{7em} \textrm{(by the inductive hypothesis)}\\
&\cong& K_0(\cB_n \cN)\big/\left(K_0(\cC \cB \ldots \cB \cN) + \ldots + K_0(\cB \ldots \cB \cC \cN)\right)\\
&\cong& K_n(\cN),
\end{eqnarray*}
as was to be proved.
\end{proof}

\section{On the D\'evissage Theorem for \texorpdfstring{$K_1$}{K1}}

Throughout this section, $\cA$ is a small abelian category and $\cB$ is a non-empty full subcategory of $\cA$ that is closed under taking subobjects, quotient objects and finite products in~$\cA$. Then $\cB$ is an abelian category as well and the inclusion $\cB \subseteq \cA$ is exact. We furthermore assume that every object $M$ of~$\cA$ has a finite filtration
\[0 = M_0 \subset M_1 \subset \ldots \subset M_k = M\]
such that the quotient $M_i/M_{i-1}$ is in $\cB$ for each $i$. Then Quillen's D\'evissage Theorem \cite[Theorem~4]{Quil1} is the following statement.

\begin{thm}\label{thm:devissage}
For every $n \ge 0$, the induced homomorphism $K_n(\cB) \rightarrow K_n(\cA)$ is bijective.
\end{thm}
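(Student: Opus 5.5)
This is Quillen's theorem, and the paper does not claim an algebraic proof. The natural plan is therefore Quillen's homotopy-theoretic argument via the Q-construction, where $K_n(\cA)=\pi_{n+1}BQ\cA$. Let $i\colon Q\cB\to Q\cA$ be the inclusion. I will apply Quillen's Theorem A: if, for every object $M$ of $Q\cA$, the comma (fibre) category $M\backslash i$ (or equivalently $i/M$) is contractible, then $Bi$ is a homotopy equivalence, and taking $\pi_{n+1}$ gives the statement for all $n\ge 0$.

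First, I will describe $i/M$ concretely. A morphism $N\to M$ in $Q\cA$ with $N\in\cB$ is a layer $N\cong M_1/M_0$, where $M_0\subseteq M_1\subseteq M$ is an admissible pair of subobjects with quotient in $\cB$. After identification, $i/M$ becomes the poset $\mathcal{L}_\cB(M)$ of such layers, ordered by $(M_0,M_1)\le(M_0',M_1')$ iff $M_0'\subseteq M_0\subseteq M_1\subseteq M_1'$.

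Second, I will prove that $\mathcal{L}_\cB(M)$ is contractible by induction on the $\cB$-length of $M$, using poset homotopies, i.e.\ natural transformations between order-preserving self-maps, which give homotopic maps on classifying spaces. Choose the largest subobject $IM\subseteq M$ lying in $\cB$ (it exists since $\cB$ is closed under finite sums and subobjects, and $M$ has a finite filtration). More usefully, choose a nonzero subobject $M'$ with $M'\in\cB$ and $M'$ an ideal-type step, e.g.\ $M'=M_1$ from the filtration.

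For a layer $(M_0,M_1)$, the key comparison is
\[(M_0,M_1)\ \ge\ (M_0\cap M',\,M_1\cap M')\ \text{-type maps},\]
combined with maps such as $(M_0,M_1)\mapsto(M_0, M_1+ M_0\cap\cdots)$. The cleanest version, following Quillen, replaces $\mathcal{L}_\cB(M)$ by the subposet of layers $(M_0,M_1)$ with $\mathfrak{m}M_1\subseteq M_0$ in the module-theoretic analogue. Abstractly, the condition is that $M_1/M_0\in\cB$. Then one uses the chain
\[(M_0,M_1)\ \le\ (M_0\cap J,\ M_1+J)\ \ge\ (J,J)?\]
for a suitable fixed subobject $J$. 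The retraction onto a cone point shows contractibility.

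The main obstacle is precisely making this deformation well defined. One must check that $(M_1+J)/(M_0\cap J)$ actually lies in $\cB$, and this fails in general. Quillen's actual fix is to use the filtration by the "$\cB$-radical": define $M\mapsto$ largest quotient in $\cB$ and largest sub in $\cB$. He then reduces contractibility to the case where the layers are controlled by a single step of the filtration, proceeding by induction on length through the maps $(M_0,M_1)\mapsto(M_0\cap M', M_1\cap M')$ and $(M_0,M_1)\mapsto(M_0+M',M_1+M')$ modulo $M'$. I would try first to establish that these two maps are order preserving and related by natural transformations to the identity and to a constant. That identifies $\mathcal{L}_\cB(M)$ homotopically with a join or cone over $\mathcal{L}_\cB(M')$ and $\mathcal{L}_\cB(M/M')$, both of which are contractible by induction, with base case $M\in\cB$ where $(0,M)$ is a terminal object. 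With the fibres contractible, Theorem A finishes the proof.
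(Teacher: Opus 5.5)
The paper gives no proof of this theorem; it quotes Quillen. Your outline is Quillen's route: apply Theorem A to $Q\cB\to Q\cA$ and identify the fibre $i/M$ with the poset $\mathcal{L}_{\cB}(M)$ of layers $(M_0,M_1)$ with $M_1/M_0\in\cB$. The gap is that the only nontrivial step, contractibility of $\mathcal{L}_{\cB}(M)$, is never carried out. The proposal lists candidate maps and stops at ``I would try'', and the comparisons it suggests do not hold:
\begin{itemize}
\item $(M_0\cap M',M_1\cap M')\le(M_0,M_1)$ would require $M_0\subseteq M'$.
\item $(M_0+M',M_1+M')$ is comparable with $(M_0,M_1)$ only if $M'\subseteq M_1$.
\end{itemize}
So neither map is related to the identity by a natural transformation. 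The chain through $(M_0\cap J,M_1+J)$ fails, as you note, because $\cB$ is not closed under extensions. No join construction is needed either.

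The missing idea is an intermediate layer whose admissibility uses only closure of $\cB$ under finite products and quotients. Take $M'\neq 0$ to be the first nonzero step of a $\cB$-filtration of $M$, so $M'\in\cB$ and $M/M'$ has a shorter filtration. For $(M_0,M_1)\in\mathcal{L}_{\cB}(M)$, the layer $(M_0,M_1+M')$ is again in $\mathcal{L}_{\cB}(M)$. Indeed, inside $M/M_0$ the object $(M_1+M')/M_0$ is the sum of $M_1/M_0$ and $(M_0+M')/M_0$. Both lie in $\cB$, so their sum is a quotient of their direct sum and lies in $\cB$. Both assignments $(M_0,M_1)\mapsto(M_0,M_1+M')$ and $(M_0,M_1)\mapsto(M_0+M',M_1+M')$ are order preserving, and
\[(M_0,M_1)\le(M_0,M_1+M')\ge(M_0+M',M_1+M').\]
Hence the identity of $B\mathcal{L}_{\cB}(M)$ is homotopic to a map that factors through the subposet of layers containing $M'$. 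That subposet is isomorphic to $\mathcal{L}_{\cB}(M/M')$, which is contractible by induction on filtration length. The base case is $M\in\cB$, where $(0,M)$ is a greatest element.

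Dually, if $M/M'\in\cB$ one may use
\[(M_0,M_1)\le(M_0\cap M',M_1)\ge(M_0\cap M',M_1\cap M'),\]
since $M_1/(M_0\cap M')$ embeds in $M_1/M_0\oplus(M_1+M')/M'$. So each of your two maps works, but for opposite choices of $M'$.

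A smaller slip: $M\backslash i$ is not equivalent to $i/M$. It is empty when $M\notin\cB$, because a morphism $M\to N$ in $Q\cA$ exhibits $M$ as a subquotient of $N$. So Theorem A must be applied to the fibres $i/M$.
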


It applies to the following two main standard situations.\\

\noindent (I) $\cA$ is an abelian category such that every object in $\cA$ is of finite length, and $\cB$ is the full subcategory of $\cA$ consisting of semi-simple objects.\\

\noindent (II) $\cA$ is the abelian category of finitely generated $R$-modules and $\cB$ is the full subcategory of $R/I$-modules where $R$ is a Noetherian commutative ring and $I$ is a nilpotent ideal in $R$.\\

In both of these situations, the inclusion $\cB \subseteq \cA$ satisfies the following condition:\\

\noindent $(*)$ For any $M \in \cA$, the set of sub-objects of $M$ which belong to $\cB$ contains a maximal one. \\

In situation (I), the set of all sub-objects of any object in $\cA$ is in fact Noetherian. In situation (II), every sub-module of any $M \in \cA$ which belongs to $\cB$ is contained in the sub-module $\{x \in M : Ix =0\} \in \cB$. In general, any maximal element in the set $\mathcal{S}$ of sub-objects of any $M \in \cA$  which belong to $\cB$ is then in fact the greatest element of $\mathcal{S}$ because the sum of any two sub-objects which belong to $\cB$ again belongs to $\cB$.\\

Following Grayson's description \cite{Grayson2012} of higher $K$-groups in terms of generators and relations, it is natural to ask whether the D\'evissage Theorem can be proved algebraically, in particular without resorting to homotopy theory as in \cite{Quil1}. Here, we (only) consider the problem of algebraically proving the surjectivity of the induced map \[c \colon K_1(\cB) \to K_1(\cA),\] and mainly only in the situation (I). We will work with Nenashev's presentation of~$K_1$:

\begin{defi}\label{def:NenashevK1}
{\em Let $\cN$ be an exact category. Then Nenashev's $K_1$-group $K_1(\cN)$ of~$\cN$} is defined as the abelian group generated by binary acylic complexes $\bP$ of length $2$ subject to the following relations:
\begin{enumerate}
 \item\label{it:NenashevK1a} If $\bP$ is a diagonal complex, then $\bP=0$.
 \item\label{it:NenashevK1b} If
 \[\begin{tikzcd}
   P'_2\ar[d, shift right]\ar[d, shift left]\ar[r, shift left]\ar[r, shift right] & P'_1\ar[d, shift right]\ar[d, shift left]\ar[r, shift left]\ar[r, shift right]& P'_0\ar[d, shift right]\ar[d, shift left] \\
  P_2\ar[d, shift right]\ar[d, shift left]\ar[r, shift left]\ar[r, shift right] & P_1\ar[d, shift right]\ar[d, shift left]\ar[r, shift left]\ar[r, shift right]& P_0\ar[d, shift right]\ar[d, shift left]\\
  P''_2\ar[r, shift left]\ar[r, shift right] & P''_1\ar[r, shift left]\ar[r, shift right]& P''_0
 \end{tikzcd}\]
is a diagram in $\cN$ such that all rows and columns are binary acyclic complexes, top differentials commute with top differentials and bottom differentials commute with bottom differentials, then
\[\bP_0 - \bP_1 +\bP_2 = \bP' - \bP + \bP''.\]
 \end{enumerate}
\end{defi}

Nenashev proves in \cite{Nenashev1998} that $K_1(\cN)$ is canonically isomorphic  to Quillen's $K_1$-group of $\cN$. This justifies the notation $K_1(\cN)$. Furthermore, it has been purely algebraically proved in \cite{KW17} and \cite{KKW19} that $K_1(\cN)$ is isomorphic to Grayson's $K_1$-group of $\cN$. 

Compared to Grayson's presentation, Nenashev's presentation has the advantage that we need to check only that the classes of binary acyclic complexes of length $2$ rather than of arbitrary finite length belong to the image of $c$. On the other hand, compared to the presentation of $K_1$ given in \cite{KKW19}, Nenashev's presentation has the advantage that, from the outset, we have more relations available to manipulate elements in $K_1$.

Henceforth, let
\[ \begin{tikzcd}M'\ar[r, shift left, "i"]\ar[r, shift right, "j"'] & M \ar[r, shift left, "p"]\ar[r, shift right, "q"']& M''\end{tikzcd}\]
be a binary short acyclic sequence in $\cA$ and let $x$ denote its class in $K_1(\cA)$ (considered as a complex supported on $[0,2]$).

\begin{lem}\label{lem:binary isomorphism}
If $\cB \subseteq \cA$ satisfies the condition $(*)$ above, the following statement holds: if $M'=0$ or $M''=0$, then $x$ belongs to the image of $c$.
\end{lem}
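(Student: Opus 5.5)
The plan is to induct on the length of a filtration with subquotients in $\cB$, splitting off the greatest $\cB$-subobject using Nenashev's relation~(2). I treat the case $M'=0$; the case $M''=0$ is symmetric, with $i,j$ in place of $p,q$ and subobjects of $M$ in place of subobjects of $M''$.

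If $M'=0$, both $p$ and $q$ are isomorphisms $M\to M''$. Fix a filtration $0=M_0\subset\ldots\subset M_k=M$ with $M_i/M_{i-1}\in\cB$. I will prove by induction on $k$ that $x$ lies in the image of $c$, for every binary sequence $0\rightrightarrows M\rightrightarrows M''$ of this kind. For $k\le 1$ we have $M\in\cB$, hence $M''\in\cB$, and $x=c(x)$ trivially.

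For the inductive step, let $B\subseteq M$ and $B''\subseteq M''$ be the greatest subobjects lying in $\cB$; these exist by $(*)$ and the remark following it. Since $\cB$ is closed under quotients, and hence under isomorphic images, $p(B)\in\cB$, so $p(B)\subseteq B''$. Applying the same argument to $p^{-1}$ gives $p^{-1}(B'')\subseteq B$, so $p(B)=B''$. Likewise $q(B)=B''$. We therefore obtain a $3\times 3$ diagram as in \cref{def:NenashevK1}\eqref{it:NenashevK1b}:
\begin{itemize}
\item the top row is $0\rightrightarrows B\rightrightarrows B''$ with differentials $p|_B,q|_B$;
\item the middle row is the given sequence;
\item the bottom row is $0\rightrightarrows M/B\rightrightarrows M''/B''$ with the induced maps $\bar p,\bar q$.
\end{itemize}
The vertical maps are the inclusions and projections, taken the same for top and bottom. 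Both the top and the bottom differentials commute with the vertical maps, because $p$ and $q$ each carry $B$ onto $B''$.

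All three columns are diagonal short exact sequences, so by relation~\eqref{it:NenashevK1a} their classes vanish. Relation~\eqref{it:NenashevK1b} then gives $x=x_B+x_{M/B}$, where $x_B$ and $x_{M/B}$ are the classes of the top and bottom rows. The class $x_B$ lies in the image of $c$ because $B,B''\in\cB$. For $x_{M/B}$, note that $M_1\in\cB$ forces $M_1\subseteq B$. Hence the images of $M_1\subseteq\ldots\subseteq M_k$ in $M/B$ form a filtration of length $k-1$ whose subquotients are quotients of the $M_i/M_{i-1}$, and so lie in $\cB$. By the induction hypothesis $x_{M/B}$ lies in the image of $c$, and since that image is a subgroup, so does $x$.

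The step I expect to need the most care is the bookkeeping in relation~\eqref{it:NenashevK1b}. First, the rows must be placed in the correct degrees of $[0,2]$, so that the signs make $x$ equal to $x_B+x_{M/B}$ up to the vanishing diagonal column classes. Second, the equality $p(B)=q(B)=B''$ is exactly what makes the square diagram commute for the top and the bottom differentials separately. Beyond these two checks, the argument is formal.
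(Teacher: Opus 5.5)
Your proposal is correct and is essentially the paper's proof: you split off the greatest $\cB$-subobject using a Nenashev $3\times 3$ diagram in which the diagonal short exact sequences contribute nothing, and then induct on the length of the filtration. The differences are only cosmetic: your diagram is the transpose of the paper's, you skip the paper's normalisation to $M''=M$, $p=\id$ by checking $p(B)=q(B)=B''$ directly, and you handle $M''=0$ by the symmetric argument rather than by the shifting lemma.
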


\begin{proof}
We first consider the case that $M'=0$.
Then $ \begin{tikzcd} M \ar[r, shift left, "p"]\ar[r, shift right, "q"']& M''\end{tikzcd}$ is isomorphic to $\begin{tikzcd} M \ar[r, shift left, "\id"]\ar[r, shift right, "\alpha"']& M\end{tikzcd}$ where $\alpha := p^{-1} \circ q$. We may therefore assume that $M=M'' \not=0$ and $p=\id$.
Let $N$ denote the maximal sub-object of $M$ that belongs to $\cB$.   Then,  $q|_N$ is a well-defined automorphism of $N$. We therefore obtain the following commutative diagram with acylic rows and vertical binary isomorphisms:
\[\begin{tikzcd}
   N\ar[d, shift right, "q|_N"' ]\ar[d, shift left, "\id"]\ar[r] & M \ar[d, shift right, "q"']\ar[d, shift left, "\id"]\ar[r]& M/N\ar[d, shift right,"\bar{q}"']\ar[d, shift left, "\id"] \\
  N \ar[r] & M \ar[r]& M/N
 \end{tikzcd}\]
By Nenashev's relations, $x$ is equal to the sum of the classes of the left-hand column and of the right-hand column. By construction, the left-hand column belongs to~$\cB$. Projecting any filtration of $M$ which quotients in $\cB$ onto $M/N$ gives a filtration of $M/N$ whose quotients are again in $\cB$ and which is shorter than the given filtration of $M$. Hence, by induction, we may assume that the class of the right-hand column belongs to the image of $c$. Then so does $x$, as claimed.

Finally, the case $M''=0$ follows from the case $M'=0$ by the shifting lemma \cite[Lemma~1.6]{HKT2017} or by arguments similar to the those above.
\end{proof}

Contrary to the previous lemma, the following example doesn't require the assumption that $\cB \subseteq \cA$ satisfies $(*)$.

\begin{ex}\label{ex: permutation}
Let $N \in \cA$ and $n \ge 0$. Then, for any permutation $\sigma$ of $\{1, \ldots, n\}$, the class of $\begin{tikzcd} N^n \ar[r, shift left, "\id"] \ar[r, shift right, "\sigma"']& N^n \end{tikzcd}$ belongs to the image of~$c$. Indeed, if
\[0 = N_0 \subset N_1 \subset \ldots \subset N_k = N\]
is a filtration of $N$ with quotients in $\cB$, then $\begin{tikzcd} N_i^n \ar[r, shift left, "\id"] \ar[r, shift right, "\sigma"']& N_i^n \end{tikzcd}$, $i = 0, \ldots, k$, form a well-defined filtration of $\begin{tikzcd} N^n \ar[r, shift left, "\id"] \ar[r, shift right, "\sigma"']& N^n \end{tikzcd}$ such that the objects in the quotients belong to $\cB$.
\end{ex}

\begin{lem}\label{lem: split}
Suppose the statement in \cref{lem:binary isomorphism} is true. If both top and bottom sequence in the binary acyclic sequence
\[ \begin{tikzcd}M'\ar[r, shift left, "i"]\ar[r, shift right, "j"'] & M \ar[r, shift left, "p"]\ar[r, shift right, "q"']& M''\end{tikzcd}\]
split, then its class $x$ belongs to the image of $c$.
\end{lem}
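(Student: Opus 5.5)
The plan is to compare $x$, via Nenashev's relation (2) in \cref{def:NenashevK1}, with the class of the diagonal split sequence. The only non-diagonal column in the comparison will be a binary isomorphism, whose class lies in the image of $c$ by the assumed statement of \cref{lem:binary isomorphism}.

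\textbf{Step 1: retractions and the comparison maps.} Since the top sequence splits, choose a retraction $r\colon M\to M'$ with $ri=\id_{M'}$. Since the bottom sequence splits, choose $r'\colon M\to M'$ with $r'j=\id_{M'}$. Define
\[\varphi:=(r,p)\colon M\to M'\oplus M'', \qquad \psi:=(r',q)\colon M\to M'\oplus M''.\]
Both are isomorphisms, because a retraction of a short exact sequence identifies the middle term with the direct sum.

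\textbf{Step 2: the $3\times 3$ diagram.} Build a diagram as in \cref{def:NenashevK1}(2) with three rows.
\begin{itemize}
\item The top row is the given binary sequence $M'\rightrightarrows M\rightrightarrows M''$.
\item The middle row is the diagonal sequence $M'\to M'\oplus M''\to M''$, with the canonical inclusion and projection as both top and bottom differentials.
\item The bottom row is zero.
\end{itemize}
The vertical binary maps are as follows.
\begin{itemize}
\item In degree $2$ (the $M'$ column) both maps are $\id_{M'}$.
\item In degree $1$ (the $M$ column) the top map is $\varphi$ and the bottom map is $\psi$.
\item In degree $0$ (the $M''$ column) both maps are $\id_{M''}$.
\end{itemize}
Each column then has the form $X\rightrightarrows Y\to 0$ with isomorphisms, so it is a binary acyclic complex. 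All rows are binary acyclic.

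The compatibilities are checked directly. For the top maps, $\varphi i=(ri,pi)=(\id,0)$ is the inclusion of $M'$, and $\mathrm{pr}_{M''}\circ\varphi=p$. For the bottom maps, $\psi j=(\id,0)$ and $\mathrm{pr}_{M''}\circ\psi=q$.

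\textbf{Step 3: reading off the relation.} Nenashev's relation equates the alternating sum of the columns with (top row) $-$ (middle row) $+$ (bottom row). The middle and bottom rows vanish, being diagonal and zero respectively, by \cref{def:NenashevK1}(1). The outer columns are diagonal, so they also vanish. Hence $x=\pm\,[\,M\rightrightarrows M'\oplus M''\,]$, where the right-hand side is the class of the column with top map $\varphi$ and bottom map $\psi$. Up to sign conventions this is a binary short acyclic sequence whose third term is $0$. Its class lies in the image of $c$ by the assumed statement of \cref{lem:binary isomorphism}, and therefore so does $x$.

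The only delicate point is bookkeeping. The columns must be placed in degrees (and the zero term put on the correct side) so that they are genuinely length-$2$ binary complexes of the type covered by \cref{lem:binary isomorphism}. If the zero falls on the other side, one uses the case $M'=0$ instead of $M''=0$ (or the shifting lemma), which the assumption also covers.
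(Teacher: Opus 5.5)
Your proof is correct and is essentially the paper's own argument. The paper uses the same comparison diagram: splittings $s,t$ of $i,j$, and vertical maps $\id$, $\binom{s}{p}$/$\binom{t}{q}$, $\id$ onto the diagonal split sequence $M'\to M'\oplus M''\to M''$. It then concludes via Nenashev's relation that $x$ is (up to sign) the class of the binary isomorphism $M\rightrightarrows M'\oplus M''$, which lies in the image of $c$ by \cref{lem:binary isomorphism}. Your explicit zero third row and sign bookkeeping just make precise what the paper leaves implicit.
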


\begin{proof}
We choose splittings $s: M \to M'$ and $t: M \to M'$ of $i$ and $j$, respectively. We then have the following commutative diagram with acyclic rows and vertical binary isomorphism:
\[\begin{tikzcd}
   M'\ar[d, equal] \ar[r, shift left, "i"] \ar[r, shift right, "j"'] & M \ar[d, shift right, "\binom{t}{q}"']\ar[d, shift left, "\binom{s}{p}"]\ar[r, shift left, "p"] \ar[r, shift right, "q"']& M''\ar[d, equal]\\
  M'\ar[r, "\binom{\id}{0}"] & M' \oplus M'' \ar[r, "(0 \, \id)"]& M''
 \end{tikzcd}\]
By Nenashev's relations, $x$ is therefore equal to the class of $\begin{tikzcd} M \ar[r, shift left, "\binom{s}{p}"] \ar[r, shift right, "\binom{t}{q}"'] & M' \oplus M''\end{tikzcd}$, which belongs to the image of $c$ by assumption.
\end{proof}

\begin{rem}
The proof above and \cref{ex: permutation} show that, without assuming condition $(*)$, $x$ belongs to the image of $c$ also in the following situation:
$M= N^n$, $M' = N^k$, $M'' = N^{n-k}$, both $i$ and $j$ are given by inclusions of $\{1, \ldots, k\}$ in $\{1, \ldots, n\}$ and $p$ and $q$ are the corresponding complementary projections (where $N$ is any object in~$\cA$ and $n \ge k \ge 0$).
\end{rem}

\begin{lem}\label{lem: equal image} \mbox{}\\ 
(a) Suppose there is a subobject $U'$ of $M'$ such that $i(U') = j(U')=: U$. Then $x$ is equal to the sum of the classes of
\[\begin{tikzcd}   U' \ar[r, shift left, "i|_U"]\ar[r, shift right, "j|_U"'] & U\ar[r]& 0 \end{tikzcd} \quad \textrm{ and } \quad
\begin{tikzcd}  M'/U' \ar[r, shift left, "\bar{i}"] \ar[r, shift right, "\bar{j}"'] & M/U \ar[r, shift left, "\bar{p}"]\ar[r, shift right, "\bar{q}"']& M'' \end{tikzcd}.\]
(b) Suppose there is a subobject $U''$ of $M''$ such that $p^{-1}(U'') = q^{-1}(U'')=:U$. Then $x$ is equal to the sum of the classes of
\[\begin{tikzcd}  M' \ar[r, shift left, "i"] \ar[r, shift right, "j"'] & U \ar[r, shift left, "p|_U"]\ar[r, shift right, "q|_U"']& U'' \end{tikzcd} \quad \textrm{ and } \quad
\begin{tikzcd}   0 \ar[r] & M/U  \ar[r, shift left, "\bar{p}"]\ar[r, shift right, "\bar{q}"'] & M''/U'' \end{tikzcd}.\]
\end{lem}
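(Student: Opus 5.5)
The plan is to apply Nenashev's relation (2) from \cref{def:NenashevK1} to a suitable $3\times 3$ diagram whose middle row is the given sequence. The diagram is chosen so that its columns have the form $\bP_k$, namely short binary sequences supported on $[0,2]$ for $k=0,1,2$. Two of the three columns will be diagonal and hence vanish by relation (1). What remains expresses $x$ as the sum of the two sequences in the statement.

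For (a), I take the top row to be $U' \rightrightarrows U \to 0$, with differentials $i|_{U'}$ and $j|_{U'}$. The middle row is the given sequence $M' \rightrightarrows M \rightrightarrows M''$, and the bottom row is $M'/U' \rightrightarrows M/U \rightrightarrows M''$. The vertical maps are as follows. In the $M'$-column I use the inclusion $U'\to M'$ and the projection $M'\to M'/U'$, with the same map as top and bottom differential. In the $M$-column I use $U \to M \to M/U$, again diagonal. In the $M''$-column I use $0\to M''\xrightarrow{\id} M''$.

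First I check that the bottom row is well defined and acyclic. Since $i(U')=U=j(U')$, both $i$ and $j$ induce maps $\bar i,\bar j\colon M'/U'\to M/U$. Also $p(U)=pi(U')=0$ and $q(U)=qj(U')=0$, so $\bar p,\bar q$ are defined. Acyclicity of the bottom rows follows from the $3\times3$ lemma, because the top and middle rows are exact and all columns are exact. The top row is acyclic since $i|_{U'}\colon U'\to U$ is an isomorphism, and similarly for $j$.

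Next I check commutativity: top differentials commute with top differentials and bottom with bottom. Since the vertical maps are diagonal, this reduces to $i$ and $j$ restricting and descending compatibly, which is exactly the hypothesis.

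Now relation (2) gives $\bP_0-\bP_1+\bP_2 = \bP'-\bP+\bP''$. Here $\bP=x$, $\bP'$ is the top row and $\bP''$ is the bottom row. Each column $\bP_k$ is a short exact sequence with equal top and bottom differentials, hence diagonal, so it is $0$ by relation (1). Therefore $x=\bP'+\bP''$, which is the claim.

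For (b) I argue dually. The top row is $M'\rightrightarrows U\rightrightarrows U''$, using $i,j$ (which land in $U$ since $pi=0=qj$ and $U\supseteq\ker p$) and $p|_U,q|_U$. The middle row is the given sequence. The bottom row is $0\to M/U\rightrightarrows M''/U''$. The columns are $M'=M'\to 0$, $U\to M\to M/U$ and $U''\to M''\to M''/U''$, all diagonal. Well-definedness of $\bar p,\bar q$ uses $p(U)\subseteq U''$ and $q(U)\subseteq U''$, and the induced maps are injective because $p^{-1}(U'')=U$. Surjectivity of $p|_U$ onto $U''$ follows from surjectivity of $p$. The rest proceeds as in (a).

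The only step needing real care is verifying exactness of the top and bottom rows, i.e.\ the exact $3\times3$-lemma bookkeeping. I expect this to be routine, since every column is a diagonal short exact sequence.
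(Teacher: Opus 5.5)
Your proof is correct and is the argument the paper leaves implicit when it says ``This is obvious'': apply Nenashev's relation (2) to the $3\times 3$ diagram whose middle row is the given sequence and whose columns are diagonal short exact sequences. The column classes then vanish by relation (1), so $x$ equals the sum of the top and bottom rows, exactly as in the proofs of \cref{lem:binary isomorphism} and \cref{lem: images direct}. Your checks of well-definedness, exactness of the outer rows (e.g.\ that $p|_U$ and $q|_U$ surject onto $U''$ and that $\bar p,\bar q$ are injective because $p^{-1}(U'')=q^{-1}(U'')=U$) and commutativity are all sound.
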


\begin{proof}
This is obvious.
\end{proof}

Given \cref{lem:binary isomorphism}, the previous lemma is of immediate use in the extreme case $i(M') = j(M')$. The next lemma deals with the opposite extreme case, i.e., when $i(M') \cap j(M') =0$.

\begin{lem}\label{lem: images direct}\mbox{}\\
(a) Suppose that
$\ker ( M' \oplus M' \xrightarrow{(i\; j)} M) =0$. Then we obtain an induced short binary acyclic sequence of the form
\[\begin{tikzcd} M' \ar[r, shift left] \ar[r, shift right] & M'' \ar[r, shift left] \ar[r, shift right] & M/ \left(i(M') + j(M')\right)\end{tikzcd}. \]
If the class of this induced sequence belongs to the image of $c$, then so does $x$. \\
(b) Suppose that ${coker} (M \xrightarrow{\binom{p}{q}} M'' \oplus M'' )= 0$. Then we obtain an induced short binary acyclic sequence of the form
\[\begin{tikzcd} i(M') \cap j(M') \ar[r, shift left] \ar[r, shift right] & M' \ar[r, shift left] \ar[r, shift right] & M''\end{tikzcd}. \]
If the class of this induced sequence belongs to the image of $c$, then so does $x$.
\end{lem}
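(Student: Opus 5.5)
We argue with Nenashev's relation (2) applied to a suitable $3\times 3$ diagram, and then use \cref{lem:binary isomorphism} to dispose of the terms whose middle object or an end object is $0$. For (a), put $I := i(M')+j(M')\subseteq M$ and $C := M/I$. The hypothesis $\ker(i\; j)=0$ says that $(i\;j)\colon M'\oplus M'\to I$ is an isomorphism. Hence $p|_I$ is an epimorphism $I\to M''$ with kernel $i(M')$, and its restriction $p\circ j\colon M'\to M''$ is a monomorphism (since $i(M')\cap j(M')=0$) whose cokernel is $M''/pj(M') \cong I/(i(M')+j(M')) \cdot$ — more precisely, $M/(i(M')+j(M'))$, obtained by passing $p$ to quotients. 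In the same way $q\circ i\colon M'\to M''$ is a monomorphism and $\operatorname{coker}(qi)\cong M/I = C$. Thus the induced binary sequence is
\[\begin{tikzcd} M' \ar[r, shift left, "pj"] \ar[r, shift right, "qi"'] & M'' \ar[r, shift left] \ar[r, shift right] & C \end{tikzcd},\]
where the top map $M''\to C$ is the one induced by $M''\cong M/i(M')\to M/I$, and the bottom map is the analogue induced via $q$.

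Next we build the $3\times3$ diagram. Its middle row is the given sequence $M'\rightrightarrows M\rightrightarrows M''$. Its top row is $0\rightrightarrows M'\rightrightarrows M'$, with top differential $\id$ and bottom differential $\id$ (after suitable identification). Its bottom row is the induced sequence above. The columns are chosen as follows.
\begin{itemize}
\item Left column: $M'\rightrightarrows M'\rightrightarrows 0$, or, in the degree-2 position, $0\to M'\to M'$, with identity maps.
\item Middle column: $M'\to M\to M''$, with top map $j$ and bottom map $i$, followed by $q$ and $p$ respectively.
\item Right column: $M'\to M''\to C$.
\end{itemize}
Concretely, the top row is $0 \rightrightarrows M' \rightrightarrows M'$ and the columns are $0\to M'\to M'$, $M'\xrightarrow{j,i} M\xrightarrow{q,p}M''$ and $M'\xrightarrow{pj,qi}M''\to C$. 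We must check that top differentials commute with top differentials, and bottom with bottom:
\begin{itemize}
\item $p\circ j = pj$ holds on the top square.
\item $q\circ i = qi$ holds on the bottom square.
\item The lower-right square commutes by construction of the induced maps.
\end{itemize}

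Nenashev's relation (2) then writes $x$ as a signed sum of the other row and column classes. The top row and the left column have a zero end term (or are diagonal), so they lie in the image of $c$ by \cref{lem:binary isomorphism}, or vanish by relation (1). The middle column is the class of the sequence obtained from $x$ by swapping top and bottom, which equals $-x$ by \cref{lem: split}-type reasoning (swapping the two differentials negates the class, a standard consequence of Nenashev's relations). Solving the relation expresses a multiple of $x$, or $x$ itself, in terms of the induced sequence plus terms in the image of $c$. The main obstacle is getting the signs and orientations right, so that $x$ is not cancelled by its swap but appears with coefficient $\pm1$.

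If this diagram makes $x$ cancel, I would instead use rows given by $M'\rightrightarrows M'\oplus M'$ (the inclusions into the two summands) together with $I$, so that the middle column is $M'\oplus M'\cong I\hookrightarrow M\to C$. The extra binary sequence $M'\rightrightarrows M'\oplus M'\rightrightarrows M'$ is split in both rows, with the permutation-type splittings of the preceding remark, so it lies in the image of $c$. Part (b) is the dual statement: apply the same argument in $\cA^{\mathrm{op}}$, or pass through the shifting lemma \cite[Lemma~1.6]{HKT2017}. Here $\operatorname{coker}\binom{p}{q}=0$ dualises the injectivity of $(i\;j)$, and $i(M')\cap j(M')$ dualises $M/(i(M')+j(M'))$.
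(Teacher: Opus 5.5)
\textbf{The primary $3\times 3$ diagram fails, and the problem is not one of signs.}

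Take the bottom row to be the induced sequence, with top differentials $pj$ and $\pi$, where $\pi\colon M''\to C$ is induced by $p$. Let $\alpha,\alpha'\colon M'\to M'$ be the maps in the left column. Then the lower-left square requires $q\circ i=pj\circ\alpha$ and $p\circ j=qi\circ\alpha'$. The lower-right square requires $\pi\circ p=\pi\circ q$. You only verified the upper-right square.

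These conditions fail in general. For vector spaces take $M'=k$, $M=k^3$, $i(a)=(a,0,0)$, $j(a)=(0,a,0)$, $p(a,b,c)=(b,c)$ and $q(a,b,c)=(c,a)$. Then $pj(M')=k\oplus 0\neq 0\oplus k=qi(M')$, so no choice of $\alpha$ helps.

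You can make the diagram commute by using the \emph{swapped} induced sequence $\bar y$ as bottom row, where $y$ is the induced class. But then Nenashev's relation reads $y-\bar x=-x+\bar y$, i.e.\ $2(x+y)=0$. More generally, any diagram of this transpose-symmetric shape only yields relations in which $x$ and $y$ have coefficient $0$ or $\pm 2$. By this shape I mean versions of $x$ as middle row and middle column, and versions of $y$ as bottom row and right column. Such a diagram can therefore never show $x\in\mathrm{im}(c)$ from $y\in\mathrm{im}(c)$.

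\textbf{Your fallback is the correct argument, and it is exactly the paper's proof.} It should be the proof rather than a contingency. Spelled out, the diagram is as follows.
\begin{itemize}
\item The top row is $M'\rightrightarrows M'\oplus M'\rightrightarrows M'$, with top differentials $\binom{\id}{0}$, $(0\;\id)$ and bottom differentials $\binom{0}{\id}$, $(\id\;0)$.
\item The middle column is the diagonal complex $M'\oplus M'\xrightarrow{(i\;j)}M\to C$. It is acyclic precisely because $\ker(i\;j)=0$.
\item The left column is $M'\xrightarrow{\id}M'\to 0$.
\item The bottom row is $0\to C\xrightarrow{\id}C$.
\end{itemize}
Commutativity then forces the right column to have differentials $pj$ and $qi$, followed by the induced maps to $C$; that is, it is the induced sequence $y$.

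The left column, middle column and bottom row are diagonal. So Nenashev's relation gives $x=r-y$, where $r$ is the class of the top row. By the proof of \cref{lem: split}, $r$ is the class of the binary isomorphism on $M'\oplus M'$ given by the identity and the swap. This lies in $\mathrm{im}(c)$ by \cref{ex: permutation}. No appeal to $\bar x=-x$ is needed.

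A minor slip: $p|_I\colon I\to M''$ is not an epimorphism unless $I=M$, since its image is $pj(M')$. Your later statements about $pj$, $qi$ and their cokernels are nevertheless correct. Part (b) by duality is as in the paper.
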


Note that the middle object in these induced sequences is $M''$ and $M'$, respectively, and hence usually `smaller' than the middle object $M$ of the given sequence.

\begin{proof}
In case (a) we obtain the commutative diagram
 \[\begin{tikzcd}
   M'\ar[d, equal]\ar[r, shift left, "\binom{\id}{0}"]\ar[r, shift right, "\binom{0}{\id}"'] & M'\oplus M' \ar[d, "(i\;j)"]\ar[r, shift left, "(0 \id )"]\ar[r, shift right, "(\id 0)"']& M'\ar[d, shift right]\ar[d, shift left] \\
  M'\ar[d]\ar[r, shift left, "i"]\ar[r, shift right, "j"'] & M\ar[d] \ar[r, shift left, "p"]\ar[r, shift right, "q"']& M''\ar[d, shift right]\ar[d, shift left]\\
  0 \ar[r] & M/(i(M') +j(M')) \ar[r, equal]& M/(i(M')+j(M'))
 \end{tikzcd}\]
with acyclic rows and columns where the morphisms in the right-hand column are induced by the other morphisms. By Nenashev's relations, $x$ is equal to the sum of the classes of the top row and of the right-hand column. The top row belongs to the image of $c$ by (the proof of) \cref{lem: split} (and \cref{ex: permutation}).
By assumption, the class of the right-hand column belongs to the image of $c$ as well. Then so does~$x$, as claimed.

Case (b) is dual to case (a) and hence follows from case (a).
\end{proof}

\begin{prop}\label{prop: length at most 3}
Suppose that the inclusion $\cB \subseteq \cA$ is as in (I). Then the element~$x$ belongs to the image of $c$ in any of the following two cases:\\
(A) $\mathrm{length}(M') \le 3$ except possibly if
\begin{itemize}[leftmargin=2.5em]
\item  $\mathrm{length}(M')=3$, $U:= i(M') \cap j(M')$ is simple and $i^{-1} (U) \cap j^{-1}(U) =0$

or

\item $\mathrm{length}(M') =3$ and $i(M') \cap j(M') =0$.
\end{itemize}
(B) $\mathrm{length}(M'') \le 3$ except possibly if
\begin{itemize}[leftmargin=2.5em]
\item $\mathrm{length}(M'')=3$, $M/ \left(i(M') + j(M') \right)$ is simple, $M''/ \left( pj(M') + qi(M')\right) =0$

or

\item $\mathrm{length}(M'')=3$ and $M/(i(M') + j(M')) =0$.
\end{itemize}
\end{prop}

\begin{proof} Note $i(M') \cap j(M') = \ker \left(\binom{p}{q} \colon M \rightarrow M'' \oplus M'' \right)$ and $M/ (i(M') + j(M')) = \mathrm{coker} ((i, j) \colon M'\oplus M' \rightarrow M )$. Further, $i^{-1}(U) \cap j^{-1} (U)$ and $M''/(pj(M') + qi(M'))$ are the kernel and cokernel of
\[\binom{pj}{qi} \colon M' \to M'' \oplus M'' \qquad \textrm{ and } \qquad (pj, qi): M' \oplus M' \to M'',\]
respectively. Therefore, part (B) is dual to part (A), and it suffices to prove part~(A).
\begin{enumerate}[left= 0pt]
\item The case $\mathrm{length}(M') =0$ is covered by \cref{lem:binary isomorphism}.
\item If $\mathrm{length}(M') = 1$, we either have $i(M') = j(M')$ or $i(M') \cap j(M') =0$.
\begin{enumerate}[leftmargin= 1.8em]
\item The case $i(M') = j(M')$ follows from \cref{lem: equal image} and \cref{lem:binary isomorphism}.
\item The case $i(M') \cap j(M') =0$ follows from \cref{lem: images direct} by induction on $\mathrm{length}(M)$.
\end{enumerate}
\item If $\mathrm{length}(M') =2$, then $U:= i(M') \cap j(M')$ is of length 2, 1 or 0.
\begin{enumerate}[leftmargin= 1.8em]
\item If it is 2, we again apply \cref{lem: equal image} and \cref{lem:binary isomorphism}.
\item If it is 1, then $U':= i^{-1}(U) \cap j^{-1}(U)$ is of length 1 or 0.
\begin{enumerate}[leftmargin=1.8em]
\item The case $\mathrm{length}(U')=1 $ follows from \cref{lem: equal image}, \cref{lem:binary isomorphism} and the case $\mathrm{length}(M') =1$ considered above.
\item The case $U'=0$ is dealt with as follows. We first note that $M' = j^{-1}(U) \oplus i^{-1} (U)$ as both sides are of length 2. This implies that\\

$i(M') + j(M') = ij^{-1}(U) \oplus U \oplus  ji^{-1}(U)$.\\

Note the sum on the right-hand side is indeed direct because otherwise we would have $ij^{-1}(U) \cap j(M')\not= 0$ which in turn would mean that $ij^{-1}(U) \subset j(M')$ and then that $i(M') = ij^{-1}(U) \oplus U \subset j(M')$ and finally that $U = i(M') \cap j(M') = i(M')$ would be of length 2. We therefore obtain the following  commutative diagram with acyclic rows and columns:\\

$\begin{tikzcd}
   U^{ 2}\ar[d, "(j^{-1} \, i^{-1})"]\ar[r, shift left, "\varphi"] \ar[r, shift right, "\psi"'] & U^3 \ar[d, "(ij^{-1} \id ji^{-1})"] \ar[r, shift left, "(0 \, 0 \id)"]\ar[r, shift right, "(\id 0\, 0)"']& U\ar[d, shift right]\ar[d, shift left] \\
  M'\ar[d]\ar[r, shift left, "i"]\ar[r, shift right, "j"'] & M \ar[d] \ar[r, shift left, "p"]\ar[r, shift right, "q"']& M''\ar[d, shift right]\ar[d, shift left]\\
  0\ar[r] & M/\left(i(M') + j(M') \right)\ar[r, equal] & M/\left(i(M') + j(M') \right);
 \end{tikzcd}$\\

here, $\varphi:= \begin{pmatrix} \id & 0 \\ 0 & \id \\ 0 & 0 \end{pmatrix}$ and  $\psi:= \begin{pmatrix} 0 & 0 \\ \id & 0 \\ 0 & \id \end{pmatrix}$,
while the morphisms in the right-hand column are induced by the other morphisms. Hence, $x$ is equal to the difference between the classes of the top row and of the right-hand column. The objects in the top row are semi-simple, hence its class belongs to the image of $c$. The class of the right-hand column belongs to the image of $c$ by the case $\mathrm{length}(M') =1$ considered above. Hence so does $x$, as claimed.
\end{enumerate}
\item If $\mathrm{length}(U)=0$, we again use \cref{lem: images direct} and induction on $\mathrm{length}(M)$.
\end{enumerate}
\item We finally consider the case $\mathrm{length}(M') = 3$. Then $U:= i(M') \cap j(M')$ is of length 3, 2, 1 or 0.
\begin{enumerate}[leftmargin=1.8em]
\item If it is 3, we again apply \cref{lem: equal image} and \cref{lem:binary isomorphism}.
\item If it is 2, then $U' :=i^{-1}(U) \cap j^{-1}(U)$ is of length 2 or 1, but not of length 0 because then the object $M'$ of length 3 would contain the object $i^{-1}(U) \oplus j^{-1}(U)$ of length 4.
\begin{enumerate}[leftmargin=1.8em]
\item The case $\mathrm{length}(U')=2$ again follows from \cref{lem: equal image}, \cref{lem:binary isomorphism} and the case $\mathrm{length}(M') =1$ considered above.
\item If $\mathrm{length}(U') =1$, then $i(U') = j(U')$ or $i(U') \cap j(U') =0$.
\begin{enumerate}[label=(\Roman*), ref=\Roman*]
\item The case $i(U') = j(U')$  follows from \cref{lem: equal image}, \cref{lem:binary isomorphism} and the case $\mathrm{length}(M') =2$ considered above.
\item The case $i(U')\cap j(U') =0$ is dealt with as follows.
We have $U= i(M') \cap j(M') = i(U') \oplus j(U')$ as both sides are of length 2 and the right-hand side is contained in the left-hand side (indeed, for example, we have $i(U') =U \cap ij^{-1}(U) \subset U \subset j(M')$). This implies\\

$\begin{aligned}i^{-1}(U) = U' \oplus i^{-1}j(U') \qquad \textrm{ and } \qquad j^{-1}(U) = j^{-1}i(U') \oplus U'.\end{aligned}$\\

This in turn implies $M' = j^{-1} i (U') \oplus U' \oplus i^{-1}j (U')$. Note that the sum on the right-hand side here is indeed direct because otherwise we would have $i^{-1}j(U') \subset j^{-1}(U)$ and then that $i^{-1}(U) = U' \oplus i^{-1} j(U')  \subset j^{-1} (U)$ and finally that $U' = i^{-1} (U) \cap j^{-1}(U) = i^{-1}(U)$ would be of length 2. This in turn implies\\

$i(M') + j(M') = ij^{-1}i(U') \oplus i(U') \oplus j(U') \oplus ji^{-1}j(U').$\\

Again, note that the sum on the right-hand side here is indeed direct because otherwise we would have $ji^{-1}j(U') \subset i(M')$ and then $j(M') = i(U') \oplus j(U') \oplus ji^{-1}j(U') \subset i(M')$ and finally that $U = i(M') \cap j(M') = j(M')$ would be of length 3. We therefore obtain the following commutative diagram with acyclic rows and columns:\\

$\begin{tikzcd}
   U'^3\ar[d, "(j^{-1}i \, \id \, i^{-1}j)"]\ar[r, shift left, "\varphi"] \ar[r, shift right, "\psi"'] & U'^4 \ar[d, "(ij^{-1}i \; i\; j\; ji^{-1}j)"] \ar[r, shift left, "(0\, 0 \, 0 \id)"]\ar[r, shift right, "(\id 0\, 0\, 0)"']& U'\ar[d, shift right]\ar[d, shift left] \\[1em]
  M'\ar[d]\ar[r, shift left, "i"]\ar[r, shift right, "j"'] & M \ar[d] \ar[r, shift left, "p"]\ar[r, shift right, "q"']& M''\ar[d, shift right]\ar[d, shift left]\\ [1em]
  0\ar[r] & M/\left(i(M') + j(M') \right)\ar[r, equal] & M/\left(i(M') + j(M') \right);
 \end{tikzcd}$\\

here, $\varphi:= \begin{pmatrix} \id & 0 & 0\\ 0 & \id &0 \\ 0 & 0 & \id \\ 0 & 0 & 0 \end{pmatrix}$ and $\psi:= \begin{pmatrix} 0 & 0 & 0 \\ \id & 0 & 0 \\ 0 & \id & 0 \\ 0 & 0 & \id\end{pmatrix}$,
while the morphisms in the right-hand column are induced by the other morphisms. Hence $x$ is equal to the difference between the classes of the top row and of the right-hand column. The objects in the top row are semi-simple, hence its class belongs to the image of~$c$. The class of the right-hand column belongs to the image of $c$ by the case $\mathrm{length}(M') =1$ considered above. Hence $x$ belongs to the image of $c$, as claimed.
\end{enumerate}
\end{enumerate}
\item If $\mathrm{length}(U)=1$, then $U':= i^{-1}(U) \cap j^{-1}(U)$ is of length 1 or 0.
\begin{enumerate}[leftmargin=1em]
\item The case $\mathrm{length}(U') =1$ again follows from \cref{lem: equal image}, \cref{lem:binary isomorphism} and the case $\mathrm{length}(M') =2$ considered above.
\item The case $\mathrm{length}(U') = 0$ has been excluded in \cref{prop: length at most 3}.
\end{enumerate}
\item The final case $\mathrm{length}(U)=0$ has been excluded in \cref{prop: length at most 3}.
\end{enumerate}
\end{enumerate}
\end{proof}

\begin{cor}\label{cor: length M at most 6}
Suppose that the inclusion $\cB \subseteq \cA$ is as in (I). Then the element~$x$ belongs to the image of $c$ if $\mathrm{length}(M) \le 6$ except possibly when $\mathrm{length}(M') =\mathrm{length}(M'') = 3$, $U:= i(M') \cap j(M')$ is simple ($\iff M/(i(M') + j(M'))$ is simple) and both $i^{-1}(U) \cap j^{-1}(U)$ and $M''/(pj(M') + qi(M'))$ vanish.
\end{cor}

\begin{proof}
This follows from \cref{prop: length at most 3}. Note that the case $U =0$, which has been excluded in \cref{prop: length at most 3} as well, here follows from \cref{lem: images direct} and \cref{lem:binary isomorphism}.
\end{proof}

Some arguments in the proof of \cref{prop: length at most 3} appear more than once. This fosters the hope that the surjectivity of $c$ can be proved by suitable (nested) inductions. However, the case considered in the next example seems to require a new type of argument.  In particular, the arguments used so far don't yet suffice to give a complete inductive proof of the surjectivity of $c$.

The following example describes kind of the smallest short binary acyclic sequence $\begin{tikzcd}M'\ar[r, shift left, "i"]\ar[r, shift right, "j"'] & M \ar[r, shift left, "p"]\ar[r, shift right, "q"']& M''\end{tikzcd}$ for which the arguments used so far don't suffice to show that its class $x$ belongs to the image of $c$. 

\begin{ex}\label{ex: binary short exact sequence}
We write $C_n$ for the cyclic group $\bbZ/ n \bbZ$ of order $n$ and consider the short binary acyclic sequence
\[\bbC: \quad \begin{tikzcd} C_2 \oplus C_4 \ar[r, shift left, "i"] \ar[r, shift right, "j"'] & C_2 \oplus C_8 \oplus C_4 \ar[r, shift left, "p"] \ar[r, shift right, "q"']& C_2 \oplus C_4 \end{tikzcd}\]
in the abelian category $\cA$ of finitely generated $\bbZ/8\bbZ$-modules where
\[i(\bar{a},\bar{b}) := (\bar{a},\overline{2b},0), \quad j(\bar{a},\bar{b}) := (0, \overline{4a}, \bar{b}), \quad p(\bar{a},\bar{b},\bar{c}) := (\bar{b},\bar{c}), \quad q(\bar{a},\bar{b},\bar{c}) := (\bar{a},\bar{b}).\]
Then $\bbC$ satisfies the exceptional conditions in \cref{cor: length M at most 6}. Quillen's D\'evissage Theorem implies that $K_1(\cA)$ = $K_1(\bbF_2) = \bbF_2^\times$ is trivial. Hence the class of $\bbC$ in $K_1(\cA)$ is the sum of some Nenashev relations and some negative Nenashev relations. What are these relations? Surprisingly, there seems to not exist any $(3 \times 3)$-diagram in $\cA$ as in \cref{def:NenashevK1}(2) whose middle row is $\bbC$ and whose top and bottom row are non-zero; so, $\bbC$ probably has to be first related to some bigger binary complex before the relation in \cref{def:NenashevK1}(2) can be used. Finding this bigger complex looks to be quite an intractable problem.  
\end{ex}

\end{document}